\def\tablenotes{\bgroup\parfillskip=0pt plus 1fil
\leftskip=0pt\relax \rightskip=0pt
\vskip2pt\footnotesize}
\def\endtablenotes{\vskip1pt\egroup}
\newtheorem{theorem}{Theorem}[section]
\newtheorem{proposition}[theorem]{Proposition}
\theoremstyle{definition}
\begin{document}

\title{A recursive algorithm for an efficient and accurate computation of incomplete Bessel functions}
\author{Richard M. Slevinsky$^\dag$ and Hassan Safouhi$^{\S,}$\footnote{The corresponding author (HS) acknowledges the financial support from the Natural Sciences and Engineering Research Council of Canada~(NSERC) - Grant RGPIN-2016-04317}\\
\\
$^\dag${\it S2.29 Mathematical Institute}\\
{\it University of Oxford}\\
{\it Andrew Wiles Building, Radcliffe Observatory Quarter}\\
{\it Woodstock Road, Oxford UK OX2 6GG}\\
\\
$^{\S}$ {\it Mathematical Section}\\
{\it Campus Saint-Jean, University of Alberta}\\
{\it 8406, 91 Street, Edmonton (AB), Canada T6C 4G9}}

\date{} %\today}
\maketitle

{\bf MSC Classification:~} 65B05; 65D30.
\vspace*{1.0cm}

\underline{\bf Abstract} \hskip 0.15cm In a previous work, we developed an algorithm for the computation of incomplete Bessel functions, which pose as a numerical challenge, based on the $G_{n}^{(1)}$ transformation and Slevinsky-Safouhi formula for differentiation. In the present contribution, we improve this existing algorithm for incomplete Bessel functions by developing a recurrence relation for the numerator sequence and the denominator sequence whose ratio forms the sequence of approximations. By finding this recurrence relation, we reduce the complexity from ${\cal O}(n^4)$ to ${\cal O}(n)$. We plot relative error showing that the algorithm is capable of extremely high accuracy for incomplete Bessel functions.

{\bf Keywords:~}
Incomplete Bessel functions. Extrapolation methods. The $G$ transformation. Numerical Integration. The Slevinsky-Safouhi formulae.

\maketitle

\clearpage
\section{Introduction}\label{sec1}

Incomplete Bessel functions, which are a computational challenge, were a subject of significant research. These functions appear when describing a plethora of phenomena in hydrology, statistics, and quantum mechanics~\cite{Jones-50-173-07, Jones-50-711-07, Hantush-Jacob-36-95-55, Harris-63-913-97, Harris-70-623-98, Harris-81-332-01, Hunt-33-179-77, Kryachko-78-303-00, Harris-Fripiat-109-1728-09}. Incomplete Bessel functions of zero order are also involved in a numerous applications to electromagnetic waves~\cite{Agrest-Maksimov-71, Lewin-AP19-134-71, Chang-Fisher-9-1129-74, Dvorak-36-26-94, Mechaik-Dvorak-8-1563-94}. By introducing a recursive algorithm for the $G$ transformation for tail integrals and applying it to incomplete Bessel functions, the article~\cite{Safouhi40} has received some attention recently~\cite{Safouhi43, Safouhi40b, Safouhi47, Creal-12, Creal-13, Nestler-Pippig-Potts-13}. Most of this attention has been due to the algorithm for incomplete Bessel functions.

Integral representations of the incomplete Bessel functions are given by~\cite{Harris-215-260-08}:
\begin{equation}
K_\nu(x,y) = \int_1^\infty \frac{e^{-x \,t - y/t}}{t^{\nu+1}} {\rm\,d} t,
\label{eq:inc1}
\end{equation}

The algorithm for the $G$ transformation~\cite{Gray-Wang-29-271-92, Gray-Atchison-4-363-67, Gray-tchison-McWilliams-8-365-71, Joyce-13-435-71} applied to incomplete Bessel functions~\cite{Safouhi40} appears as:
\begin{equation}
\tilde{G}^{(1)}_n(x,y,\nu) = x^\nu\frac{\tilde{\cal N}_n(x,y,\nu)}{{\cal D}_n(x,y,\nu)},
\label{EQGNXY}
\end{equation}
where:
\begin{equation}
{\cal D}_n(x,y,\nu) = (-x\,y)^n\,x^{\nu+1}\,e^{x+y} \sum_{r=0}^n \binom{n}{r} (-y)^{-r} \sum_{i=0}^r A_r^i\,x^i,
\end{equation}
where the $A_r^i$ are the coefficients in the Slevinsky-Safouhi formula I~\cite{Safouhi37} and are given by:
\begin{equation}\label{eq:theoremcoeff}
A^i_k = \left\{ \begin{array}{lll}
1 & \quad \textrm{for} \quad & i = k,\\
\displaystyle (n-\nu-(k-1)(\mu+1))A_{k-1}^0 & \quad \textrm{for} \quad & i = 0,~k>0, \\
(n-\nu+i(m+1)-(k-1)(\mu+1))A^i_{k-1}+A^{i-1}_{k-1} & \quad \textrm{for} \quad &  0<i<k,
\end{array}
\right.
\end{equation}
with $(\mu,\nu,m,n) = (-2,-\nu-1,0,0)$.

The numerator $\tilde{\cal N}_n(x,y,\nu)$ in~\eqref{EQGNXY} is given by:
\begin{equation}
\tilde{\cal N}_n(x,y,\nu) = \frac{e^{-x-y}}{x^\nu\,y}\sum_{r=1}^n\binom{n}{r} \, {\cal D}_{n-r}(x,y,\nu)\,(x\,y)^r\sum_{s=0}^{r-1}\binom{r-1}{s}\,y^{-s}\sum_{i=0}^sA_s^i(-x)^i,
\end{equation}
where the $A_s^i$ are the coefficients in the Slevinsky-Safouhi formula I with $(\mu,\nu,m,n) = (-2,\nu-1,0,0)$.

Therefore, the original algorithm involves sums over sums over sums, effectively making the complexity of the computation of the numerator an ${\cal O}(n^3)$ process and the complexity of the computation of the denominator an ${\cal O}(n^2)$ process. Therefore, calculating a sequence of approximations $\left\{G_n\right\}_{n\ge0}$ results in an ${\cal O}(n^4)$ algorithm.

In the following, we introduce an algorithm for the $G$ transformation that reduce the calculation to an easily programmable and parallel four-term recurrence relation: with one initialization, it computes the numerator; with another initialization, it computes the denominator. As there are only a finite number of arithmetic operations required in the recurrence relation, the resulting algorithm for calculating a sequence of approximations $\left\{G_n\right\}_{n\ge0}$ is an ${\cal O}(n)$ algorithm.

The inductive proof of this recurrence relation follows and the notation is heavy because of the two variables $x$ and $y$ and the parameter $\nu$ already included in the incomplete Bessel function. As well, with this new algorithm comes the ability to program the $G$ transformation to unprecedentedly high order. We show some numerical results in the form of figures of the relative error for six different values. The reduction in complexity of the original algorithm comes with the benefit of a stable algorithm for high order.

\section{The algorithm}\label{sec2}

\begin{theorem}~\cite{Levin-Sidi-9-175-81}\label{thm:levinsidi} Let $f(x)$ be integrable on $[0,\infty)$ (i.e. $\int_0^{\infty}f(t)\, \mathrm{d} t \,<\, \infty$) and satisfy a linear differential equation of order $m$ of the form:
\begin{equation}
f(x) = \displaystyle \sum_{k=1}^m  p_k(x)\, f^{(k)}(x),
\end{equation}
where $p_k$ for $k=1,2,\ldots,m$ have asymptotic expansions as $x \to \infty$, of the form:
\begin{eqnarray}
p(x) & \sim & x^{i_k} \sum_{i=0}^\infty \frac{a_i}{x^i} \quad \textrm{with} \quad i_k \, \le \, k.
\label{eq:asympfirst}
\end{eqnarray}

If for $1 \le i \le m$ and $i \le k \le m$, we have:
\begin{equation}
\displaystyle \lim_{x\to\infty} p_k^{(i-1)}(x) \, f^{(k-i)}(x) = 0,
\end{equation}
and for every integer $l \ge -1$, we have:
\begin{equation}
\displaystyle \sum_{k=1}^m l (l-1) \cdots(l-k+1) p_{k,0} \ne 1,
\end{equation}
where $p_{k,0} = \displaystyle \lim_{x\to\infty} x^{-k}\, p_k(x)$ for $1 \le k \le m$, then as $x \to \infty$, we have:
\begin{equation}\label{eq:asympsecond}
\int_x^\infty f(t)\, \mathrm{d} t \sim \sum_{k=0}^{m-1} x^{j_k}\, f^{(k)}(x) \left(\beta_{0,k} + \frac{\beta_{1,k}}{x} + \frac{\beta_{2,k}}{x^2}+\cdots+ \right),
\end{equation}
where $j_k \le \mathrm{max}(i_{k+1}, i_{k+2}-1, \ldots, i_{m}-m+k+1) \quad for \quad k=0,1,\ldots,m-1$.
\end{theorem}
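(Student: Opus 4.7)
The plan is to reduce the tail integral to boundary terms at $x$ via repeated integration by parts on the ODE representation of $f$, and then to read off the asymptotic exponents $j_k$ directly from the orders $i_k$ of the coefficients $p_k$. I would substitute the ODE into the tail integral,
\[
\int_x^\infty f(t)\,\mathrm{d} t \;=\; \sum_{k=1}^m \int_x^\infty p_k(t)\,f^{(k)}(t)\,\mathrm{d} t,
\]
and integrate each summand by parts $k$ times, each iteration shifting one derivative from $f^{(k)}$ onto $p_k$. The boundary contributions at infinity vanish by the hypothesis $\lim_{x\to\infty} p_k^{(i-1)}(x)\,f^{(k-i)}(x)=0$, and collecting the boundary terms at $x$ yields
\[
\int_x^\infty f(t)\,\mathrm{d} t \;=\; \sum_{h=0}^{m-1} f^{(h)}(x)\,q_h(x) \;+\; \int_x^\infty Q(t)\,f(t)\,\mathrm{d} t,
\]
with $q_h(x) = \sum_{k=h+1}^m (-1)^{k-h}\,p_k^{(k-1-h)}(x)$ and $Q(t)=\sum_{k=1}^m(-1)^k p_k^{(k)}(t)$. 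Term-by-term differentiation of \eqref{eq:asympfirst} gives $p_k^{(k-1-h)}(x) \sim x^{i_k-k+1+h}$ times a series in $1/x$, so $q_h$ has the form $x^{j_h}(\beta_{0,h}+\beta_{1,h}/x+\cdots)$ with $j_h\le\max_{k\ge h+1}(i_k-k+1+h)$, which is exactly the bound stated in the theorem.

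Next I would absorb the remainder $\int_x^\infty Q(t) f(t)\,\mathrm{d} t$ into the same form by iteration. Equivalently, I would adopt the ansatz $\int_x^\infty f(t)\,\mathrm{d} t \sim \sum_{k=0}^{m-1} x^{j_k} f^{(k)}(x)\sum_{l\ge 0}\beta_{l,k} x^{-l}$, differentiate both sides and identify the result with $-f(x)$, then use the ODE to re-express any $f^{(r)}$ with $r\ge m$ in terms of derivatives of lower order. Matching coefficients of $x^{-l}$ against each $f^{(k)}(x)$ produces a triangular system for the unknowns $\beta_{l,k}$, whose diagonal entry at level $l$ is, up to a sign, $1-\sum_{k=1}^m l(l-1)\cdots(l-k+1) p_{k,0}$; this is precisely the quantity required to be non-zero in the hypothesis. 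I expect this to be the main obstacle: the combinatorial bookkeeping for how a coefficient at level $l$ depends on coefficients at levels $<l$ requires care, and one must carefully verify that the system really does triangularise with the claimed diagonal entry.

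Finally, to upgrade the formal series into a Poincar\'e asymptotic expansion, I would truncate after $N$ terms and estimate the remainder. Because the constraint $i_k\le k$ keeps $Q(t)$ bounded as $t\to\infty$, each successive iteration of the integration by parts pulls out an extra factor of $1/x$, and a standard induction then shows that the truncation error decays faster than $x^{j_k-N} f^{(k)}(x)$ for every $k$, yielding the asymptotic statement in the required sense.
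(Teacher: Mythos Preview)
The paper does not contain a proof of this theorem at all: it is quoted verbatim as a result of Levin and Sidi~\cite{Levin-Sidi-9-175-81} and then used without justification to motivate the $G_n^{(m)}$ transformation. Consequently there is no ``paper's own proof'' to compare your proposal against.

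That said, your outline is essentially the original Levin--Sidi argument. Substituting the ODE into $\int_x^\infty f$ and integrating each $\int p_k f^{(k)}$ by parts $k$ times does produce the boundary expression $\sum_{h=0}^{m-1} q_h(x) f^{(h)}(x)$ with $q_h$ as you wrote, and your computation $p_k^{(k-1-h)}\sim x^{i_k-k+1+h}$ gives exactly the stated bound $j_h\le\max_{k\ge h+1}(i_k-k+1+h)$. Your identification of the diagonal obstruction $1-\sum_k l(l-1)\cdots(l-k+1)p_{k,0}$ in the recursive determination of the $\beta_{l,k}$ is also the correct mechanism explaining that hypothesis. The two places where your sketch is thinnest are (i) the claim that the coefficient system is genuinely triangular --- one must check that re-expressing $f^{(m)}$ via the ODE does not feed level-$l$ unknowns back into themselves except through the diagonal term --- and (ii) the remainder bound, where ``$Q(t)$ bounded'' is not by itself enough: you need that each iteration strictly lowers the effective order so that the truncated tail is $o$ of the last retained term. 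Both points are handled in the cited reference, and your plan would close if you filled them in along those lines.
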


To solve for the unknowns $\beta_{k,i}$, we must set up and solve a system of linear equations.

The $G_{n}^{(m)}$ transformation~\cite{Gray-Wang-29-271-92} truncates the asymptotic expansions~\eqref{eq:asympsecond} after $n$ terms each and the system is formed by differentiation. The approximation $G_{n}^{(m)}$ to $\int_0^\infty f(t)\mathrm{d} t$ is given as the solution of the system of $mn+1$ linear equations~\cite{Gray-Wang-29-271-92}:
\begin{equation}\label{eq:gtrans}
\frac{\mathrm{d}^l}{\mathrm{d} x^l}\left\{G^{(m)}_n - \int_0^{x}f(t)\, \mathrm{d} t - \sum_{k=0}^{m-1}x^{\sigma_k}f^{(k)}(x)\sum_{i=0}^{n-1}\frac{\bar{\beta}_{k,i}}{x^i}\right\} = 0,\quad l=0,1,\ldots, mn,
\end{equation}
where it is assumed that $\displaystyle\frac{\mathrm{d}^l}{\mathrm{d} x^l}G^{(m)}_n\equiv0,\forall l>0$.
In the above system~\eqref{eq:gtrans}, $\sigma_k = \mathrm{min}(s_k,k+1)$ where $s_k$ is the largest of the integers $s$ such that $\displaystyle \lim_{x\to\infty}x^sf^{(k)}(x)=0$ holds, $k=0,1,\ldots,m-1$. Also, $G^{(m)}_n$ and $\bar{\beta}_{k,i}$ are the respective set of $mn+1$ unknowns.

The $G^{(1)}_{n}$ transformation can be written as the solution to the linear system~\eqref{eq:gtrans} with $m=1$. Instead of solving the system of linear equations each time for each order $n$, it would be ideal to resolve each approximation $G^{(1)}_n$ in a recursive manner~\cite{Safouhi40}.

By considering the equation~\eqref{eq:gtrans} for $l=0$:
\begin{equation}
G^{(1)}_n - F(x) = x^{\sigma_0}f(x)\sum_{i=0}^{n-1}\frac{\bar{\beta}_{0,i}}{x^i} \quad \textrm{with} \quad F(x) \,=\, \int_0^{x}f(t)\mathrm{d} t,
\end{equation}
and by isolating the summation on the right hand side, we obtain:
\begin{equation}
\displaystyle\frac{\displaystyle G^{(1)}_n - F(x)}{x^{\sigma_0}f(x)} = \sum_{i=0}^{n-1}\frac{\bar{\beta}_{0,i}}{x^i}.
\end{equation}

To eliminate the summation, and consequently all of the unknowns $\bar{\beta}_{0,i}$, we apply the $\displaystyle \left(x^2 \frac{\mathrm{d}}{\mathrm{d} x}\right)$ operator $n$ times, obtaining:
\begin{equation}
\left(x^2 \frac{\mathrm{d}}{\mathrm{d} x}\right)^n \left[ \frac{\displaystyle G^{(1)}_n - F(x)}{x^{\sigma_0}f(x)}\right] = 0 \quad \Longrightarrow \quad G^{(1)}_n = \displaystyle \frac{\left(x^2 \frac{\mathrm{d}}{\mathrm{d} x}\right)^n  \left(\frac{\displaystyle F(x)}{x^{\sigma_0}f(x)}\right)} {\left(x^2 \frac{\mathrm{d}}{\mathrm{d} x}\right)^n  \left(\frac{1}{x^{\sigma_0}f(x)}\right)},
\end{equation}
which leads to a recursive algorithm for the $G^{(1)}_n$ transformation.

\begin{enumerate}
\item Set:
\begin{equation}
{\cal N}_0(x) = \frac{F(x)}{x^{\sigma_0} f(x)} \qquad \textrm{and} \qquad {\cal D}_0(x) = \frac{1}{x^{\sigma_0} f(x)}.
\end{equation}

\item For $n=1,2,\ldots,$ compute ${\cal N}_n(x)$ and ${\cal D}_n(x)$ recursively from:
\begin{equation}
{\cal N}_{n}(x) = \left(x^2 \frac{\mathrm{d}}{\mathrm{d} x}\right) {\cal N}_{n-1}(x) \qquad \textrm{and} \qquad {\cal D}_{n}(x) = \left(x^2 \frac{\mathrm{d}}{\mathrm{d} x}\right) {\cal D}_{n-1}(x).
\label{eq:numden}
\end{equation}

\item For all $n$, the approximations $G^{(1)}_n(x)$ to $\displaystyle\left(\int_0^x+\int_x^\infty\right) f(t)\, \mathrm{d} t$ are given by:
\begin{equation}
G^{(1)}_n(x) = \frac{{\cal N}_n(x)}{{\cal D}_n(x)}.
\end{equation}
\end{enumerate}

For the incomplete Bessel functions, $\sigma_0=0$, and the algorithm for the $G_n^{(1)}$ transformation takes the form below. Let:
\begin{equation}
f(t) = \dfrac{e^{-xt-y/t}}{t^{\nu+1}} \quad \textrm{and} \quad \displaystyle F(t) = \int_0^tf(s){\rm\,d}s.
\end{equation}
\begin{enumerate}
\item Set:
\begin{equation}
{\cal N}_0(x,y,\nu,t) = \frac{F(t)}{f(t)} \qquad \textrm{and} \qquad {\cal D}_0(x,y,\nu,t) = \frac{1}{f(t)}.
\end{equation}

\item For $n=1,2,\ldots,$ compute ${\cal N}_n(x,y,\nu,t)$ and ${\cal D}_n(x,y,\nu,t)$ recursively from:
\begin{eqnarray}
{\cal N}_{n}(x,y,\nu,t) & = & \left(t^2 \frac{\mathrm{d}}{\mathrm{d} t}\right) {\cal N}_{n-1}(x,y,\nu,t)
\nonumber\\
{\cal D}_{n}(x,y,\nu,t) & = & \left(t^2 \frac{\mathrm{d}}{\mathrm{d} t}\right) {\cal D}_{n-1}(x,y,\nu,t).
\end{eqnarray}

\item For all $n\ge0$, the approximations $\tilde{G}^{(1)}_n(x,y,\nu)$ to $K_\nu(x,y)$ are given by:
\begin{equation}
\tilde{G}^{(1)}_n(x,y,\nu) = \frac{\tilde{\cal N}_n(x,y,\nu,1)}{{\cal D}_n(x,y,\nu,1)},
\end{equation}
where:
\begin{equation}
\tilde{\cal N}_n(x,y,\nu,t) = {\cal N}_n(x,y,\nu,t)-F(t){\cal D}_n(x,y,\nu,t).
\end{equation}
\end{enumerate}

\begin{proposition}\label{alg:recrel}
Let:
\begin{align}
\tilde{N}_0(x,y,\nu) = 0 & \qquad \textrm{and} \qquad  \tilde{N}_1(x,y,\nu) = 1,\label{eq:Ninit}\\
D_0(x,y,\nu) = e^{x+y} & \qquad \textrm{and} \qquad  D_1(x,y,\nu) = (x+\nu+1-y)D_0(x,y,\nu),\label{eq:Dinit}
\end{align}
and:
\begin{equation}
\tilde{N}_{-1}(x,y,\nu) = D_{-1}(x,y,\nu)=0.
\end{equation}

Let also:
\begin{eqnarray}
(n+1)Q_{n+1}(x,y,\nu) & = & (x+\nu+1+2n-y) \, Q_n(x,y,\nu)
\nonumber\\ & + & (2y-\nu-n) \, Q_{n-1}(x,y,\nu) - y \, Q_{n-2}(x,y,\nu),
\label{eq:Qrec}
\end{eqnarray}
where the $Q_n(x,y,\nu)$ stand for either the $\tilde{N}_n(x,y,\nu)$ or the $D_n(x,y,\nu)$.

Then:
\begin{equation}
\tilde{G}_{n}^{(1)}(x,y,\nu)  = \dfrac{\tilde{N}_n(x,y,\nu)}{D_n(x,y,\nu)}.\label{eq:Grep}
\end{equation}
\end{proposition}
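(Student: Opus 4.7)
The plan is to identify the proposition's $D_n$ and $\tilde{N}_n$ with the Taylor coefficients at $z=0$ of closed-form generating functions built from the recursive definitions of $\mathcal{D}_n(x,y,\nu,t)$ and $\tilde{\mathcal{N}}_n(x,y,\nu,t)$, and then to read off the four-term recurrence~\eqref{eq:Qrec} from a single first-order linear ODE in $z$. This bypasses any need to unroll the triple sums appearing in the original representations of $\tilde{\mathcal N}_n$ and $\mathcal{D}_n$.

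\textbf{Denominator.} Introduce $\Phi_D(z,t) = \sum_{n\geq0} \mathcal{D}_n(x,y,\nu,t)\, z^n/n!$. Because the recursion $\mathcal{D}_n = (t^2\, d/dt)\mathcal{D}_{n-1}$ is equivalent to the transport equation $\partial_z \Phi_D = t^2\, \partial_t \Phi_D$, the method of characteristics with initial data $\Phi_D(0,t) = t^{\nu+1} e^{xt+y/t}$ yields $\Phi_D(z,t) = \mathcal{D}_0\bigl(x,y,\nu,\tfrac{t}{1-tz}\bigr)$. Setting $t=1$ gives $H(z) := \mathcal{D}_0\bigl(x,y,\nu,\tfrac{1}{1-z}\bigr)$. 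Differentiating in $z$ with $u=1/(1-z)$, $du/dz = u^2$, using the elementary identity $u^2 \mathcal{D}_0'(u) = [xu^2 + (\nu+1)u - y]\mathcal{D}_0(u)$ read off from the explicit form of $\mathcal{D}_0$, and multiplying by $(1-z)^2$, produces
\[
(1-z)^2 H'(z) = \bigl[(x+\nu+1-y) + (2y-\nu-1)z - y z^2\bigr] H(z).
\]
Equating coefficients of $z^n$ on the two sides gives precisely the recurrence~\eqref{eq:Qrec} for $D_n := [z^n] H(z) = \mathcal{D}_n(x,y,\nu,1)/n!$, while $H(0)$ and $H'(0)$ supply the initial conditions~\eqref{eq:Dinit}.

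\textbf{Numerator.} The same transport argument applied to $\mathcal{N}_n$ gives $\sum \mathcal{N}_n(x,y,\nu,t) z^n/n! = F\bigl(\tfrac{t}{1-tz}\bigr)\,\mathcal{D}_0\bigl(x,y,\nu,\tfrac{t}{1-tz}\bigr)$, and hence the generating function of $\tilde{\mathcal{N}}_n(x,y,\nu,1)$ is $\tilde H(z) = \bigl[F(\tfrac{1}{1-z}) - F(1)\bigr] \mathcal{D}_0\bigl(x,y,\nu,\tfrac{1}{1-z}\bigr)$. Differentiating and using $f(u)\,\mathcal{D}_0(x,y,\nu,u) = 1$ cancels the contribution of $F'$ against one factor of $\mathcal{D}_0$, leaving
\[
(1-z)^2 \tilde H'(z) = 1 + \bigl[(x+\nu+1-y) + (2y-\nu-1)z - y z^2\bigr] \tilde H(z).
\]
The constant source term only affects the coefficient of $z^0$, forcing $\tilde N_1 = 1$; for $n \geq 1$ comparing coefficients of $z^n$ reproduces~\eqref{eq:Qrec} with $\tilde N_n := [z^n] \tilde H(z) = \tilde{\mathcal{N}}_n(x,y,\nu,1)/n!$, and $\tilde H(0)=0$ forces $\tilde N_0 = 0$.

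With both identifications in hand, the ratio $\tilde{G}^{(1)}_n = \tilde{\mathcal{N}}_n(x,y,\nu,1)/\mathcal{D}_n(x,y,\nu,1) = \tilde N_n/D_n$ is immediate and~\eqref{eq:Grep} follows. The main subtlety I anticipate is the factorial normalization: it is $\mathcal{D}_n/n!$ and $\tilde{\mathcal{N}}_n/n!$ that satisfy the clean three-step recurrence, so if one attempts to write~\eqref{eq:Qrec} directly for $\mathcal{D}_n$ and $\tilde{\mathcal{N}}_n$ themselves, spurious factors of $n$ and $n(n-1)$ appear that obscure the correct structure and make a direct induction on $n$ significantly harder to close.
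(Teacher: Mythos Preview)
Your argument is correct and is a genuinely different proof from the one in the paper. The paper proceeds by direct induction on $n$: it shows that the $t$-dependent sequence $\mathcal{D}_n(t)$ satisfies a four-term recurrence with polynomial-in-$t$ coefficients (equation~\eqref{eq:Dinproofrec}), the inductive step being ``differentiate, multiply by $t^2$, then subtract $2t$ times the previous identity'' to close the recursion; it then checks by hand that $\mathcal{N}_n(t)$ (and hence $\tilde{\mathcal N}_n(t)$) satisfies the same recurrence with different seeds, and only at the very end sets $t=1$ and divides by $n!$. Your route instead packages the whole tower $\{\mathcal{D}_n\}$, $\{\tilde{\mathcal N}_n\}$ into exponential generating functions, solves the transport PDE $\partial_z\Phi=t^2\partial_t\Phi$ in closed form via characteristics, and reads off~\eqref{eq:Qrec} from a single first-order ODE $(1-z)^2 H'(z)=P(z)H(z)$ (respectively $(1-z)^2\tilde H'(z)=1+P(z)\tilde H(z)$) with $P$ quadratic.

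The trade-off is that the paper's inductive argument is entirely elementary---nothing beyond differentiation and algebra---but the key subtraction step is somewhat unmotivated, and the passage from the $t$-recurrence to~\eqref{eq:Qrec} (the $n!$ normalization) is stated rather than derived. Your generating-function argument explains structurally \emph{why} a four-term recurrence must exist (degree-two polynomial coefficients in the ODE), produces the $n!$ normalization automatically, and makes the numerator case transparent: the extra inhomogeneous ``$1$'' coming from $f\cdot\mathcal{D}_0\equiv 1$ only perturbs the $z^0$ coefficient, which is exactly the observation the paper reaches after its separate computation of $\mathcal{N}_1$ and $\mathcal{N}_2$. Your closing remark about the spurious $n$, $n(n-1)$ factors is also on point: those are precisely the polynomial-in-$t$ coefficients in the paper's intermediate recurrence~\eqref{eq:Dinproofrec} before specializing to $t=1$ and rescaling.
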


\begin{proof}[\bf Proof.]
We begin the proof with the denominators.

Let ${\cal D}_n(t) = {\cal D}_n(x,y,\nu,t)$ for short and let ${\cal D}_{-2}(t) = {\cal D}_{-1}(t) = 0$.

The sequence $\{{\cal D}_n(t)\}_{n\ge0}$ is generated by the one-term recurrence:
\begin{equation}
{\cal D}_n(t) = \left(t^2\dfrac{\rm d}{{\rm d} t}\right){\cal D}_{n-1}(t).
\end{equation}

Then we show that ${\cal D}_n(t)$ satisfies, for $n\ge0$:
\begin{eqnarray}\label{eq:Dinproofrec}
{\cal D}_{n+1}(t) & = & (xt^2+(\nu+1+2n)t-y) {\cal D}_n(t)
\nonumber\\ & + & (2nty-n(n-1)t^2-n(\nu+1)t^2) D_{n-1}(t) - n(n-1)t^2 y D_{n-2}(t).
\end{eqnarray}

For $n=0$:
\begin{eqnarray}
{\cal D}_1(t) & = & \left(t^2\dfrac{\rm d}{{\rm d} t}\right){\cal D}_0(t)
\nonumber\\ & = & \left(t^2\dfrac{\rm d}{{\rm d} t}\right)t^{\nu+1}e^{xt+y/t}
\nonumber\\ & = & (xt^2+(\nu+1)t-y){\cal D}_0(t).
\end{eqnarray}

The induction argument assumes:
\begin{eqnarray}\label{eq:Dindarg}
{\cal D}_{k+1}(t) & = & (xt^2+(\nu+1+2k)t-y) {\cal D}_k(t)
\nonumber\\ & + & (2kty-k(k-1)t^2 - k(\nu+1)t^2) D_{k-1}(t)-k(k-1) t^2 y D_{k-2}(t).
\end{eqnarray}

Differentiating and multiplying by $t^2$, we obtain:
\begin{align}
{\cal D}_{k+2}(t) & = (xt^2+(\nu+1+2k)t-y){\cal D}_{k+1}(t)
\nonumber\\& + (2xt^3+(\nu+1+2k)t^2+2kty-k(k-1)t^2-k(\nu+1)t^2)D_{k}(t)
\nonumber\\& + (2kt^2y-2k(k-1)t^3-2k(\nu+1)t^3-k(k-1)t^2y)D_{k-1}(t)
\nonumber\\& -  2k(k-1)t^3yD_{k-2}(t).
\label{eq:Dindarg2}
\end{align}

Multiplying~\eqref{eq:Dindarg} by $2t$ and subtracting it from~\eqref{eq:Dindarg2}, we obtain, after simplification:
\begin{align}
{\cal D}_{k+2}(t) & = (xt^2+(\nu+1+2(k+1))t-y){\cal D}_{k+1}(t)
\nonumber\\& + (2(k+1)ty-k(k+1)t^2-(k+1)(\nu+1)t^2)D_{k}(t)-k(k+1)t^2yD_{k-1}(t),
\end{align}
which proves~\eqref{eq:Dinproofrec} by induction.

As with the denominator, let ${\cal N}_n(t) = {\cal N}_n(x,y,\nu,t)$ and $\tilde{\cal N}_n(t) = \tilde{\cal N}_n(x,y,\nu,t)$ for short.

Since ${\cal N}_0(t) = F(t){\cal D}_0(t)$:
\begin{eqnarray}
{\cal N}_1(t) & = & F(t){\cal D}_1(t) + t^2f(t){\cal D}_0(t)
\nonumber\\ & = & F(t){\cal D}_1(t) + t^2.
\end{eqnarray}

But we now write $F(t) = \dfrac{{\cal N}_0(t)}{{\cal D}_0(t)}$ to conclude that:
\begin{equation}\label{eq:Nindarg}
{\cal N}_1(t) = (t^2x+(\nu+1)t-y){\cal N}_0(t) + t^2.
\end{equation}

Differentiating and multiplying by $t^2$, we obtain:
\begin{equation}\label{eq:Nindarg2}
{\cal N}_2(t) = (t^2x+(\nu+1)t-y){\cal N}_1(t)+(2xt^3+(\nu+1)t^2){\cal N}_0(t) + 2t^3.
\end{equation}

Multiplying~\eqref{eq:Nindarg} by $2t$ and subtracting it from~\eqref{eq:Nindarg2}, we obtain, after simplification:
\begin{equation}
{\cal N}_2(t) = (t^2x+(\nu+3)t-y){\cal N}_1(t)+(2yt-(\nu+1)t^2){\cal N}_0(t).
\end{equation}

But this is just~\eqref{eq:Dinproofrec} for $n=1$ with the labels ${\cal N}_n(t)$ interchanged for ${\cal D}_n(t)$. Any further differentiation and multiplication by $t^2$ will, therefore, ultimately lead to the same four-term recurrence relation, the difference being different initial conditions.

As a sequences:
\begin{equation}
\{\tilde{\cal N}_n(t)\}_{n\ge0} = \{{\cal N}_n(t) - F(t){\cal D}_n(t)\}_{n\ge0},
\end{equation}
is a linear combination of both solutions ${\cal N}_n(t)$ and ${\cal D}_n(t)$.

Therefore, $\tilde{\cal N}_n(t)$ satisfies~\eqref{eq:Dinproofrec} as well with the labels appropriately interchanged.

To complete the proof, we must return to the original sequences:
\begin{equation}
\{\tilde{N}_n(x,y,\nu)\}_{n\ge0} \qquad \textrm{and} \qquad \{D_n(x,y,\nu)\}_{n\ge0}.
\end{equation}

Indeed, the relationship is that:
\begin{equation}
Q_n(x,y,\nu) = \dfrac{{\cal Q}_n(x,y,\nu,1)}{n!},
\end{equation}
where the $Q_n(x,y,\nu)$ stand for either the $\tilde{N}_n(x,y,\nu)$ or the $D_n(x,y,\nu)$ and the ${\cal Q}_n(x,y,\nu,t)$ stand for either the $\tilde{\cal N}_n(x,y,\nu,t)$ or the ${\cal D}_n(x,y,\nu,t)$.
\end{proof}

\section{Figures}\label{sec5}

In Figures~\ref{fig:IBFSplots1} and \ref{fig:IBFSplots2}, the relative error of the $G$ transformation is shown for several different values of $x$, $y$, and $\nu$. This figure shows the excellent stability and convergence properties of the improved algorithm that come with the use of the stable four-term recurrence relation.
\begin{figure}[!ht]
\centering
\includegraphics[width=0.80\textwidth]{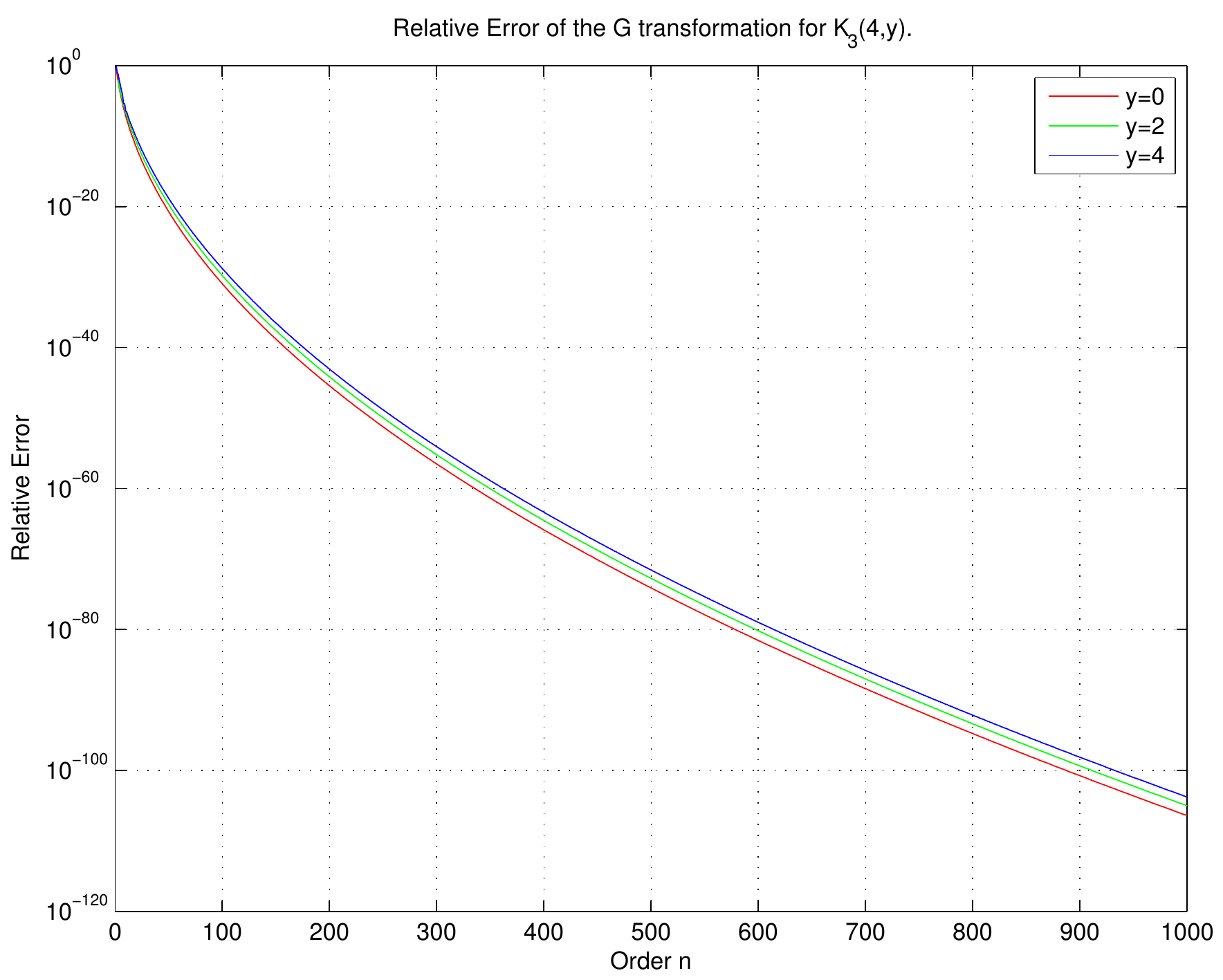}
\caption{Plots of the relative error of the $G$ transformation for $K_3(4,y)$, $y=0,2,4$.}
\label{fig:IBFSplots1}
\end{figure}

\clearpage

\begin{figure}[!ht]
\centering
\includegraphics[width=0.80\textwidth]{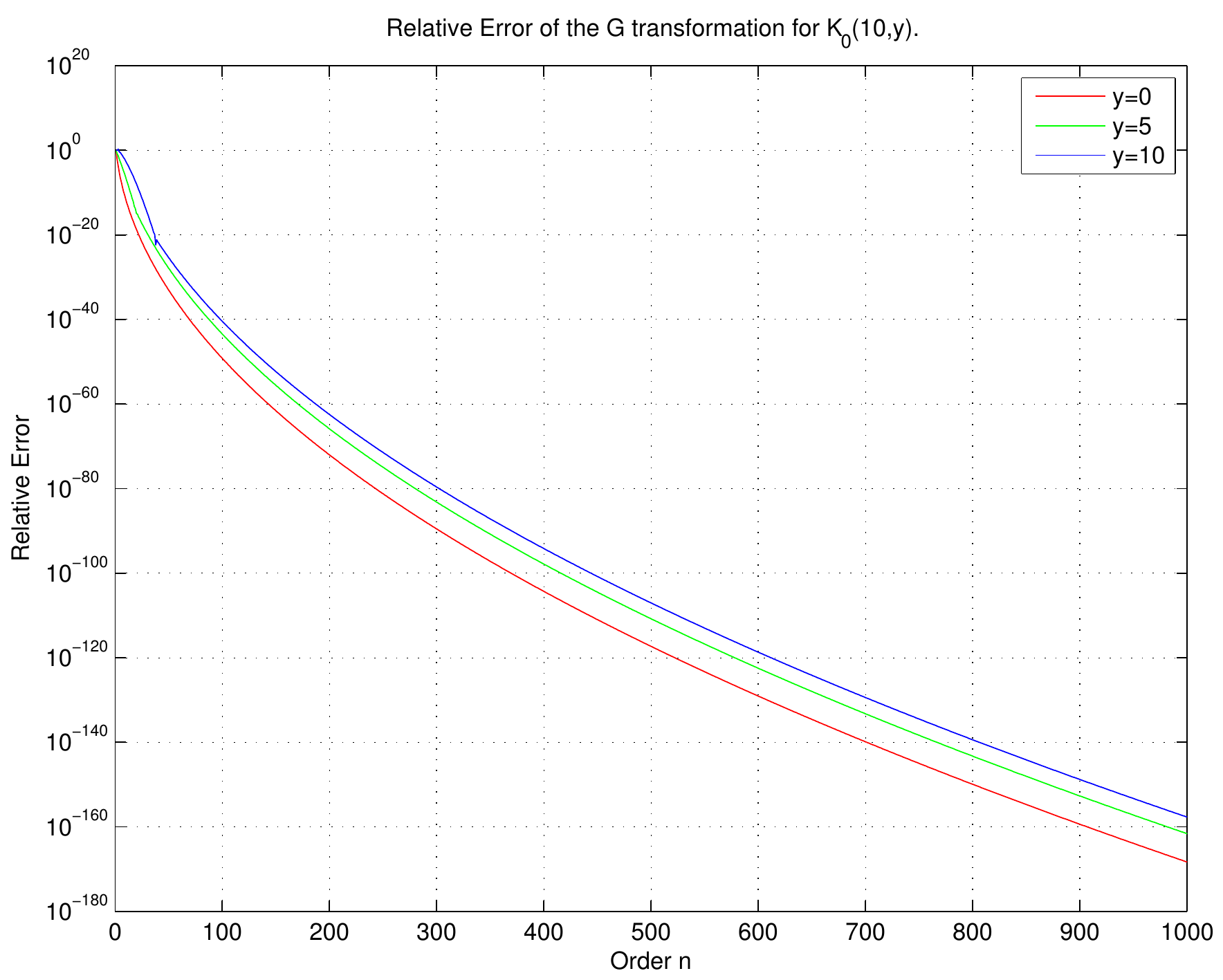}\\
\caption{Plots of the relative error of the $G$ transformation for $K_0(10,y)$, $y=0,5,10$.}
\label{fig:IBFSplots2}
\end{figure}

\section{Conclusion}\label{sec4}

We improve an existing algorithm for incomplete Bessel functions based on the $G$ transformation~\cite{Safouhi40} by developing a recurrence relation the numerator sequence and the denominator sequence whose ratio form the sequence of approximations. By finding this recurrence relation, we reduce the complexity from ${\cal O}(n^4)$ to ${\cal O}(n)$. We plot relative error showing that the algorithm is capable of extremely high accuracy for incomplete Bessel functions. The stability and convergence appear to be remarkable.

\clearpage


\begin{thebibliography}{10}

\bibitem{Jones-50-173-07}
D.S. Jones.
\newblock Incomplete {Bessel} functions. i.
\newblock {\em Proceedings of the Edinburgh Mathematical Society}, 50:173--183,
  2007.

\bibitem{Jones-50-711-07}
D.S. Jones.
\newblock Incomplete {Bessel} functions. {II}. asymptotic expansions for large
  argument.
\newblock {\em Proceedings of the Edinburgh Mathematical Society}, 50:711--723,
  2007.

\bibitem{Hantush-Jacob-36-95-55}
M.S. Hantush and C.E. Jacob.
\newblock Non-steady radial flow in an infinite leaky aquifer.
\newblock {\em Trans. Amer. Geophys. Union.}, 36:95--100, 1955.

\bibitem{Harris-63-913-97}
F.E. Harris.
\newblock New approach to calculation of the leaky aquifer function.
\newblock {\em Int. J. Quantum Chem.}, 63:913--916, 1997.

\bibitem{Harris-70-623-98}
F.E. Harris.
\newblock More about the leaky aquifer function.
\newblock {\em Int. J. Quantum Chem.}, 70:623--626, 1998.

\bibitem{Harris-81-332-01}
F.E. Harris.
\newblock On {Kryachko}'s formula for the leaky aquifer function.
\newblock {\em Int. J. Quantum Chem.}, 81:332--334, 2001.

\bibitem{Hunt-33-179-77}
B.~Hunt.
\newblock Calculation of the leaky aquifer function.
\newblock {\em J. Hydrol.}, 33:179--183, 1977.

\bibitem{Kryachko-78-303-00}
E.S. Kryachko.
\newblock Explicit expression for the leaky aquifer function.
\newblock {\em Int. J. Quantum Chem.}, 78:303--305, 2000.

\bibitem{Harris-Fripiat-109-1728-09}
F.E. Harris and J.G. Fripiat.
\newblock Methods for incomplete {Bessel} evaluation.
\newblock {\em J. Comp. Appl. Math.}, 109:1728--1740, 2009.

\bibitem{Agrest-Maksimov-71}
M.M. Agrest and M.S. Maksimov.
\newblock {\em Theory of incomplete cylinder functions and their applications}.
\newblock Springer, Philadelphia, 1971.

\bibitem{Lewin-AP19-134-71}
L.~Lewin.
\newblock The near field of a locally illuminated diffracting edge.
\newblock {\em IEEE Trans. Antennas Propagat.}, AP19:134--136, 1971.

\bibitem{Chang-Fisher-9-1129-74}
D.C. Chang and R.J. Fisher.
\newblock A unified theory on radiation of a vertical electric dipole above a
  dissipative earth.
\newblock {\em Radio Sci.}, 9:1129--1138, 1974.

\bibitem{Dvorak-36-26-94}
S.L. Dvorak.
\newblock Applications for incomplete {Lipschitz-Hankel} integrals in
  electromagnetics.
\newblock {\em IEEE Antennas Propagat. Mag.}, 36:26--32, 1994.

\bibitem{Mechaik-Dvorak-8-1563-94}
M.M. Mechaik and S.L. Dvorak.
\newblock Exact closed form field expressions for a semiinfinite traveling-wave
  current filament in homogeneous space.
\newblock {\em J. Electromagn. Waves Appl.}, 8:1563--1584, 1994.

\bibitem{Safouhi40}
R.M. Slevinsky and H.~Safouhi.
\newblock A recursive algorithm for the ${G}$ transformation and accurate
  computation of incomplete {Bessel} functions.
\newblock {\em App. Num. Math.}, 60:1411--1417, 2010.

\bibitem{Safouhi43}
P.~Gaudreau, R.M. Slevinsky, and H.~Safouhi.
\newblock Computation of tail probability distributions via extrapolation
  methods and connection with rational and {Pad\'e} approximants.
\newblock {\em SIAM J. Sci. Comput.}, 34:B65--B85, 2012.

\bibitem{Safouhi40b}
D.~Scott, T.~Tran, R.M. Slevinsky, and H.~Safouhi.
\newblock The incomplete bessel {K} function in {R} {Package
  DistributionUtils}.
\newblock {\em http://cran.r-project.org/web/packages/DistributionUtils/},
  2012.

\bibitem{Safouhi47}
R.M. Slevinsky and H.~Safouhi.
\newblock Useful properties of the coefficients of the slevinsky-safouhi
  formula for differentiation.
\newblock {\em Numerical Algorithm}, 66:457--477, 2014.

\bibitem{Creal-12}
D.D. Creal.
\newblock Exact likelihood inference for autoregressive gamma stochastic
  volatility models.
\newblock {\em Research Seminar}, pages 1--35, 2012.

\bibitem{Creal-13}
D.D. Creal.
\newblock A class of non-gaussian state space models with exact likelihood
  inference: Appendix.
\newblock {\em Working Paper}, pages 1--28, 2013.

\bibitem{Nestler-Pippig-Potts-13}
F.~Nestler, M.~Pippig, and D.~Potts.
\newblock Fast ewald summation based on nfft with mixed periodicity.
\newblock {\em Working Paper}, pages 1--38, 2013.

\bibitem{Harris-215-260-08}
F.E. Harris.
\newblock Incomplete {Bessel}, generalized incomplete gamma, or leaky aquifer
  functions.
\newblock {\em J. Comp. Appl. Math.}, 215:260--269, 2008.

\bibitem{Gray-Wang-29-271-92}
H.L. Gray and S.~Wang.
\newblock A new method for approximating improper integrals.
\newblock {\em SIAM J. Numer. Anal.}, 29:271--283, 1992.

\bibitem{Gray-Atchison-4-363-67}
H.L. Gray and T.A. Atchison.
\newblock Nonlinear transformation related to the evaluation of improper
  integrals. {I}.
\newblock {\em SIAM J. Numer. Anal.}, 4:363--371, 1967.

\bibitem{Gray-tchison-McWilliams-8-365-71}
H.L. Gray, T.A. Atchison, and G.V. McWilliams.
\newblock Higher order ${G}$-transformations.
\newblock {\em SIAM J. Numer. Anal.}, 8:365--381, 1971.

\bibitem{Joyce-13-435-71}
D.C. Joyce.
\newblock Survey of extrapolation processes in numerical analysis.
\newblock {\em SIAM Rev.}, 13:435--490, 1971.

\bibitem{Safouhi37}
R.M. Slevinsky and H.~Safouhi.
\newblock New formulae for higher order derivatives and applications.
\newblock {\em J. Comput. App. Math.}, 233:405--419, 2009.

\bibitem{Levin-Sidi-9-175-81}
D.~Levin and A.~Sidi.
\newblock Two new classes of non-linear transformations for accelerating the
  convergence of infinite integrals and series.
\newblock {\em Appl. Math. Comput.}, 9:175--215, 1981.

\end{thebibliography}
\end{document}